\newtheorem{theorem}{\sc Theorem}
\newtheorem{lemma}[theorem]{\sc Lemma}
\newtheorem{proposition}[theorem]{\sc Proposition}
\newtheorem{corollary}[theorem]{\sc Corollary}
\newtheorem{Index Convention}{Index Convention}
\begin{document}
\title{The ranks of central factor and commutator groups}
\author{Leonid A. Kurdachenko}
\address{Department of Algebra, National University of Dnepropetrovsk
72 Gagarin Av., Dnepropetrovsk, Ukraine 49010}
\email{lkurdachenko@i.ua}
\author{Pavel Shumyatsky}
\address{Department of Mathematics, University of Brasilia,
Brasilia-DF, 70910-900 Brazil ,pavel@unb.br}
\email{pavel@mat.unb.br}
\thanks{The work of the second author was supported by CNPq-Brazil}
\keywords{groups of finite rank, the Schur theorem}
\subjclass{ 20D25, 20F14}
\begin{abstract} The Schur Theorem says that if $G$ is a group whose center $Z(G)$ has finite index $n$, then the order of the derived group $G'$ is finite and bounded by a number depending only on $n$. In the present paper we show that if $G$ is a finite group such that $G/Z(G)$ has rank $r$, then the rank of $G'$ is $r$-bounded. We also show that a similar result holds for a large class of infinite groups.
\end{abstract}
\maketitle

The famous Schur Theorem says that if $G$ is a group whose center $Z(G)$ has finite index $n$, then the order of the derived group $G'$ is finite and bounded by a number depending only on $n$. Throughout the paper we say that a quantity is $\{a,b,c,\dots\}$-bounded if it is bounded by a number that depends only on the parameters $a,b,c,\dots$. The theorem provides a very useful tool for group theorists. Quite naturally, ever since the theorem was proved the relation between $G/Z(G)$ and $G'$ was in the focus of considerable attention. One rather straightforward generalization of the Schur theorem is that if $G/Z(G)$ is locally finite, then $G'$ is locally finite as well (see \cite[p. 102]{robinson} for example). On the other hand, there is a well-known example, due to Adian, of a torsion-free group $G$ such that $G/Z(G)$ is periodic and even has prime exponent \cite{adian}. Thus, periodicity of $G/Z(G)$ in general does not imply that of $G'$. Using the positive solution to the restricted Burnside problem \cite{ze1,ze2} A. Mann showed that if $G/Z(G)$ is locally finite and has finite exponent $n$, then $G'$ is locally finite and has finite $n$-bounded exponent \cite{mann}. A number of other results in the spirit of the Schur theorem can be found in \cite{polo,kur,dfk}. In the present paper we examine the situation where $G/Z(G)$ has finite rank. Recall that a group $K$ has finite rank $r$ if every finitely generated subgroup of $K$ can be generated by at most $r$ elements. We would like to thank A. Yu. Olshanskii for explaining to us how it can be shown that there exists a group $G$ with the property that $Z(G)$ is a free abelian group of infinite rank while $G$ is a perfect group such that all proper subgroups of $G/Z(G)$ are of prime order. The proof of this fact goes as follows. In Section 27 of \cite{olsh} one finds the construction of the group $G(\infty)$ as a limit of the sequence $\{G(i)\}$. All proper subgroups of $G(\infty)$ have prime order \cite[Theorem 28.1]{olsh}. The combination of Lemma 27.2, Lemma 25.1 and Theorem 31.1(2) of \cite{olsh} guarantees that for the aspherical corepresentation $G(\infty)=F/N$ the group $N/[N,F]$ is a free abelian group of infinite rank. Now arguing as in the proof of Corollary 31.2 we deduce that the Schur multiplier of $G(\infty)$ is a free abelian group of infinite rank. The proof also uses the fact that the group $G(\infty)$ does not coincide with $G(i)$ for any $i=1,2,\dots$. This can be shown by arguing as in the proof of Theorem 19.3 and replacing the reference to Theorem 19.1 by that to Theorem 26.2.

Thus, finiteness of the rank of $G/Z(G)$ in general does not imply that of  $G'$. We will show however that under some reasonable additional hypothesis on the group $G$ we do have a rank version of the Schur theorem.

Our first result provides a rank version of the Schur theorem for finite groups.

\begin{theorem}\label{main} Let $G$ be a finite group such that $G/Z(G)$ has rank $r$. Then the rank of $G'$ is $r$-bounded.
\end{theorem}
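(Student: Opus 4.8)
The plan is to deduce the theorem from a bound on the rank of the Schur multiplier of a finite group of rank $r$. Write $Z = Z(G)$ and $Q = G/Z$, so that $Q$ is a finite group of rank $r$. The five-term homology exact sequence attached to the central extension $1 \to Z \to G \to Q \to 1$ reads $H_2(G) \to H_2(Q) \to Z \to G^{\mathrm{ab}} \to Q^{\mathrm{ab}} \to 0$ (integral coefficients), and a diagram chase identifies the image of the connecting map $\delta\colon H_2(Q)\to Z$ with $Z \cap G'$, since the kernel of $Z \to G^{\mathrm{ab}}=G/G'$ is exactly $Z\cap G'$. Hence $Z \cap G'$ is a homomorphic image of the Schur multiplier $M(Q) = H_2(Q)$, so $r(Z \cap G') \le r(M(Q))$. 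On the other hand $G'/(G' \cap Z) \cong G'Z/Z = Q'$ has rank at most $r(Q) = r$. Since rank is subadditive along the extension $1\to G'\cap Z \to G' \to Q' \to 1$, I obtain $r(G') \le r(M(Q)) + r$, and the whole problem reduces to showing that $r(M(Q))$ is $r$-bounded.

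Next I would reduce the multiplier bound to $p$-groups. For a prime $p$ and $P \in \mathrm{Syl}_p(Q)$, the composite of corestriction and restriction is multiplication by $[Q:P]$ on $M(Q)$, which is invertible on the $p$-primary component $M(Q)_{(p)}$; therefore restriction embeds $M(Q)_{(p)}$ into $M(P)$ and $r(M(Q)_{(p)}) \le r(M(P))$. As $M(Q)$ is a finite abelian group, its rank equals the maximum of the ranks of its primary components, so $r(M(Q)) \le \max_p r(M(P_p))$ with $P_p \in \mathrm{Syl}_p(Q)$ of rank $\le r$. It now suffices to prove that if $P$ is a finite $p$-group of rank $r$, then $r(M(P))$ is $r$-bounded.

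This last statement is the heart of the matter, and it is where finiteness is genuinely used: the Olshanskii example described above shows the analogous assertion fails for infinite groups of rank $1$. To prove it I would invoke the theory of powerful $p$-groups. By Lubotzky--Mann a $p$-group of rank $r$ contains a powerful characteristic subgroup $P_0$ whose index $m = [P:P_0]$ is $r$-bounded. Feeding the extension $1 \to P_0 \to P \to P/P_0 \to 1$ into the Lyndon--Hochschild--Serre spectral sequence, $M(P) = H_2(P)$ is filtered by subquotients of $H_2(P/P_0)$, of $H_1(P/P_0,\, P_0^{\mathrm{ab}})$, and of the coinvariants $M(P_0)_{P/P_0}$. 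The first has $r$-bounded order because $|P/P_0| = m$ is $r$-bounded; the second is annihilated by $m$ and assembled from the module $P_0^{\mathrm{ab}}$ of rank $\le r$, hence has $r$-bounded rank; and the third is a quotient of $M(P_0)$, so has rank $\le r(M(P_0))$. It then remains to bound $r(M(P_0))$ for a powerful $p$-group of rank $r$, where the near-abelian structure of $P_0$ (a product of at most $r$ cyclic factors, with homology of exterior-algebra type) should yield $r(M(P_0)) \le \binom{r}{2}$.

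I expect the main obstacle to be precisely this $p$-group multiplier estimate: controlling $r(M(P_0))$ for powerful $P_0$ and, equally, checking that the spectral-sequence contributions do \emph{not} accumulate in the passage from $P_0$ up to $P$. A naive induction on $|P|$ through one central subgroup of order $p$ at a time, using the Ganea sequence, would lose a bounded amount at each step but would run through unboundedly many steps, so it cannot be used directly; the point of routing through a \emph{single} powerful subgroup of $r$-bounded index is exactly to incur the loss only once. Everything upstream of this kernel --- the reduction via the five-term sequence and the Sylow reduction for the multiplier --- is then purely formal.
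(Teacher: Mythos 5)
Your formal reductions are correct and take a genuinely different route from the paper. The five-term homology sequence does identify $Z(G)\cap G'$ with a homomorphic image of $M(G/Z(G))$, so $r(G')\le r+r(M(G/Z(G)))$, and the corestriction--restriction argument correctly reduces the multiplier bound to Sylow subgroups. The paper instead proceeds structurally: it peels off the soluble radical $R$ (bounding the derived length of $R/F(R)$ via Zassenhaus, handling $F(R)'$ by the nilpotent case, then applying Lemmas \ref{1.1} and \ref{1.0}), and treats the top of the group via Lemma \ref{schrei}, which rests on the Schreier conjecture and on the explicit list of Schur multipliers of the finite simple groups --- i.e.\ on the classification. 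Your route, if completed, would bypass the classification entirely, since the simple composition factors are absorbed into the Sylow reduction; that is a real structural difference and a potential simplification.

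The genuine gap is the step you yourself flag: you never prove that a finite $p$-group $P$ of rank $r$ has $r(M(P))$ bounded in terms of $r$, and your sketch does not close it. The Lyndon--Hochschild--Serre reduction to a powerful characteristic subgroup $P_0$ is fine (though note that $H_1(P/P_0,P_0^{\mathrm{ab}})$ has bounded rank because it is a subquotient of $\mathbb{Z}[P/P_0]\otimes P_0^{\mathrm{ab}}$, not because it is annihilated by $m$ --- annihilation bounds the exponent, not the rank). But the terminal claim $r(M(P_0))\le r(r-1)/2$ for powerful $P_0$ is asserted on the strength of ``homology of exterior-algebra type,'' and that heuristic does not apply: Lazard's exterior-algebra description of cohomology is a statement about \emph{uniform} pro-$p$ groups, whereas a finite powerful $p$-group has torsion and is not uniform, and being a product of $r$ cyclic subgroups does not by itself yield a Kunneth-type bound on $H_2$. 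So the heart of your argument is an unproved assertion. It is worth noting that the missing statement is in fact true and is essentially equivalent to the nilpotent case of the theorem: applying Proposition \ref{nilpo} to a covering group $\widetilde P$ of $P$ (a stem extension with kernel $M(P)\le Z(\widetilde P)\cap\widetilde P'$, so that $\widetilde P/Z(\widetilde P)$ is a quotient of $P$ and hence has rank at most $r$) gives $r(M(P))\le r(\widetilde P')$, which is $r$-bounded. The paper proves Proposition \ref{nilpo} by a non-homological argument (Shalev's wreath-product criterion, Lemma \ref{1.2} and Lemma \ref{1.1}); importing it would complete your skeleton and yield a classification-free proof, but as written your proposal is missing exactly this kernel.
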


The proof of the above theorem depends on the classification of finite simple groups. The Lubotzky-Mann theory of powerful $p$-groups \cite{ddms} plays an important role in the proof as well. From Theorem \ref{main} we deduce a rank version of the Schur theorem for a large class of infinite groups.

A group $G$ is called {\it generalized radical} if it has an ascending series whose quotients are either locally nilpotent or locally finite. Accordingly, a group $G$ is locally generalized radical if every finitely generated subgroup of $G$ is generalized radical.

\begin{theorem}\label{second} Let $G$ be a locally generalized radical group such that $G/Z(G)$ has finite rank $r$. Then the rank of $G'$ is finite and $r$-bounded.
\end{theorem}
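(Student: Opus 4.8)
The plan is to reduce Theorem~\ref{second} to the finite case already established in Theorem~\ref{main}. First I would observe that the rank of $G'$ is detected by the finitely generated subgroups of $G$: if $K\le G'$ is finitely generated, then the finitely many commutators generating $K$ involve only finitely many elements of $G$, so $K\le H'$ for a suitable finitely generated subgroup $H\le G$. Hence $\mathrm{rank}(G')=\sup_H\mathrm{rank}(H')$, the supremum being taken over all finitely generated $H\le G$. Since $H\cap Z(G)\le Z(H)$ and $H/(H\cap Z(G))$ embeds in $G/Z(G)$, each such $H$ satisfies $\mathrm{rank}(H/Z(H))\le r$. Thus it suffices to bound $\mathrm{rank}(H')$ by a function of $r$ alone, and we may assume from the outset that $G$ is finitely generated.

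Next I would invoke the structure theory of locally generalized radical groups of finite rank: a finitely generated group of this kind is soluble-by-finite and minimax, the classification of finite simple groups entering through the locally finite part. Applying this to $\bar G:=G/Z(G)$, which is $r$-generated of rank $r$, shows that $\bar G$ is a finitely generated soluble-by-finite minimax group; consequently $G$ itself, being a central extension of $\bar G$ by the abelian group $Z(G)$, is soluble-by-finite. Since $G'/(G'\cap Z(G))\cong G'Z(G)/Z(G)$ embeds in $\bar G$ and hence has rank at most $r$, everything comes down to bounding the rank of the central subgroup $A:=G'\cap Z(G)$.

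To bound $\mathrm{rank}(A)$ I would pass to finite quotients. A finitely generated soluble-by-finite minimax group is residually finite, so I choose a descending chain of finite-index normal subgroups $N_i\trianglelefteq G$ with $\bigcap_i N_i=1$. For each $i$ the image $Z(G)N_i/N_i$ lies in $Z(G/N_i)$, so $(G/N_i)/Z(G/N_i)$ is a quotient of $G/Z(G)$ and has rank at most $r$; Theorem~\ref{main} then gives $\mathrm{rank}\big((G/N_i)'\big)=\mathrm{rank}\big(G'/(G'\cap N_i)\big)\le f(r)$ for an $r$-bounded function $f$. Restricting to $A$ yields $\mathrm{rank}\big(A/(A\cap N_i)\big)\le f(r)$ for every $i$. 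Because $G$ is residually finite, $A$ is a residually finite abelian minimax group, so it has no nontrivial divisible part and its rank is the supremum of the ranks of its finite quotients along the chain; hence $\mathrm{rank}(A)\le f(r)$. Combining with the previous paragraph, $\mathrm{rank}(G')\le r+f(r)$, which is $r$-bounded.

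The main obstacle is precisely the control of the central part $A=G'\cap Z(G)$, which is the homomorphic image of the Schur multiplier of $\bar G$ arising as the image of the transgression in the five-term homology sequence of the central extension $1\to Z(G)\to G\to\bar G\to 1$. This is exactly the quantity that is unbounded in the Olshanskii example, where $Z(G)$ is free abelian of infinite rank; the hypothesis that $G$ be locally generalized radical is what forces $A$ to be minimax and residually finite, allowing the finite bound of Theorem~\ref{main} to be transferred to $G$. The one point requiring care is the verification that $G$ (equivalently, that $A$) is minimax: this follows either from the cited structure theory once $\bar G$ is known to be minimax, or from a minimax analogue of Schur's theorem asserting that $G'$ is minimax whenever $G/Z(G)$ is.
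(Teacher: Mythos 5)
Your opening reduction to finitely generated groups is correct and coincides with the paper's closing argument, and the structural input you invoke (finitely generated locally generalized radical groups of finite rank are soluble-by-finite and minimax) is essentially what Lemma~\ref{solv} delivers. The fatal step is the assertion that a finitely generated soluble-by-finite minimax group is residually finite, and with it the claim that $A=G'\cap Z(G)$ is reduced and that its rank is the supremum of the ranks of its images in finite quotients of $G$. This is false: by a classical construction of P.~Hall, there exist finitely generated soluble minimax groups of finite rank whose centre contains a quasicyclic group $C_{p^{\infty}}$. Concretely, let $N$ be the Heisenberg group of unitriangular $3\times 3$ matrices over the ring of rationals whose denominators are powers of $p$, let $t$ act on $N$ by scaling the two off-diagonal generators by $p$ and $p^{-1}$ (so that the centre of $N$ is fixed), and factor the finitely generated group $N\rtimes\langle t\rangle$ by an infinite cyclic central subgroup; the resulting group $G$ is finitely generated, soluble, minimax, of finite rank, and $G'\cap Z(G)$ contains a copy of $C_{p^{\infty}}$. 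Such a divisible central subgroup dies in \emph{every} finite quotient of $G$, so no bound obtained from Theorem~\ref{main} applied to finite quotients can see it, and the inequality $\mathrm{rank}(A)\le f(r)$ does not follow from your argument. (The example does not contradict the theorem --- there the divisible part has rank $1$ --- but your proof gives no control over it, and "minimax, hence finitely many quasicyclic summands" yields a finite rank that is not $r$-bounded.)

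This is exactly the point where the paper takes a different and unavoidable route: instead of transferring the finite bound through finite quotients, it bounds the rank of the derived subgroup of the soluble part \emph{directly}, by commutator calculations valid in arbitrary groups. The key tools are Lemma~\ref{not new} and Lemma~\ref{1.0} (for a normal abelian $M$ and $G=\langle M Z(G),x_1,\dots,x_r\rangle$ one gets $[M,G]=\prod_i[M,x_i]$ with each factor of rank at most $r$), and Lemma~\ref{1.1}, which bounds $r(G')$ by induction on the derived length of $G/Z(G)$. The locally finite part $T/Z(G)$ is handled separately by Corollary~\ref{locfin}, which is where Theorem~\ref{main} and the inverse limit argument enter; after factoring out $T'$ and $L'$ one is reduced to the genuine Schur situation of a centre of finite index. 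If you want to rescue your approach, you would need to replace the residual finiteness step by an argument of this direct kind for the central divisible part; as written, the proposal has a genuine gap.
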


In the present paper we make no attempts to write down explicit bounds for the rank of $G'$ in Theorems \ref{main} and \ref{second}. Throughout the paper we use without explicit references the facts that if $r(G)=r$, then every subgroup and every quotient of $G$ has rank at most $r$ and that if $G$ has a normal subgroup $N$ such that $r(N)=r_1$ and $r(G/N)=r_2$, then $r(G)\leq r_1+r_2$.

The next lemma is well-known but for the reader's convenience we include the proof.

\begin{lemma}\label{not new} Let $G$ be a group having a subset $X$ and a normal abelian subgroup $N$ such that $G=\langle N,X\rangle$. Then $[N,G]=\prod_{x\in X}[N,x]$.
\end{lemma}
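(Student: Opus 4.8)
\emph{Proof sketch.} Set $M=\prod_{x\in X}[N,x]$. The first point is that $M$ is a subgroup of $N$. Since $N$ is normal, each commutator $[n,x]=n^{-1}n^{x}$ lies in $N$, and since $N$ is abelian the map $n\mapsto[n,x]$ is an endomorphism of $N$; hence its image $[N,x]$ is a subgroup of $N$. As $N$ is abelian, the product of the subgroups $[N,x]$, $x\in X$, is again a subgroup, so $M\le N$. The inclusion $M\subseteq[N,G]$ is immediate, because each $x$ belongs to $G$.

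The content is the reverse inclusion, i.e. $[n,g]\in M$ for all $n\in N$ and $g\in G$. I would argue by induction on the length of an expression of $g$ as a product of elements of $X\cup N$ and their inverses. Writing $g=wh$ with $h$ a single generator (or the inverse of one) and $w$ shorter, the identity $[n,wh]=[n,h]\,[n,w]^{h}$ together with $m^{h}=m\,[m,h]$ gives the factorisation $[n,wh]=[n,h]\,[n,w]\,[[n,w],h]$. Here $[n,w]\in M$ by the inductive hypothesis, while both $[n,h]$ and $[[n,w],h]$ lie in $[N,h]$ — the latter because $[n,w]\in N$. Since $M$ is a subgroup, it therefore suffices to know that $[N,h]\subseteq M$ for every generator $h$. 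This is clear when $h\in X$; it holds trivially when $h\in N$ or $h\in N^{-1}$, because then $[N,h]=1$ as $N$ is abelian; and for $h=x^{-1}$ with $x\in X$ it follows from $[n,x^{-1}]=\bigl([n^{x^{-1}},x]\bigr)^{-1}\in[N,x]$. The base case of the induction is covered by exactly these observations, and the inductive step then closes.

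The point that deserves care is the conjugate $[n,w]^{h}$ produced by the commutator identity in the inductive step. The naive remedy would be to prove separately that $M\trianglelefteq G$, but establishing normality of $M$ directly turns out to be awkward and essentially circular, since conjugating $[N,x]$ by another generator $x'$ produces $[N,x^{x'}]$ with $x^{x'}\notin X$. The device $m^{h}=m\,[m,h]$ sidesteps this entirely: it converts the conjugation into the error term $[[n,w],h]$, which lands in $[N,h]\subseteq M$ merely because $[n,w]\in N$, so the induction never needs the full normality of $M$. In module language, writing $N$ additively as a $G$-module with $[n,g]=(g-1)n$, the whole argument reduces to the pair of identities $(gh-1)N\subseteq(g-1)N+(h-1)N$ and $(g^{-1}-1)N=(g-1)N$, both of which rest only on the equality $hN=N$.
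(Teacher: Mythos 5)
Your proof is correct, but it takes a different route from the paper's. The paper sets $K=\prod_{x\in X}[N,x]$ and reduces everything to showing $K\trianglelefteq G$: once that is known, one passes to $G/K$, where the image of $N$ is centralised by the images of $X$ (since $[N,x]\le K$) and by itself (since $N$ is abelian), hence is central, giving $[N,G]\le K$. Normality itself is established in one line from the observation that $x$ normalises every subgroup of $N$ containing $[N,x]$, because $h^{x}=h[h,x]$. You instead avoid the quotient and prove $[n,g]\in M$ directly by induction on the length of $g$ as a word in the generators, expanding $[n,wh]=[n,h]\,[n,w]\,[[n,w],h]$ and checking $[N,h]\subseteq M$ for each generator $h$ (including inverses, via $[n,x^{-1}]=([n^{x^{-1}},x])^{-1}$). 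Both arguments ultimately rest on the same identity $m^{h}=m[m,h]$; the paper's version is shorter and conceptually cleaner, while yours is more explicit and self-contained, handling the inverse generators and the subgroup property of $M$ in detail that the paper leaves implicit. One remark: your aside that proving normality of $M$ directly is ``awkward and essentially circular'' because conjugation produces $[N,x^{x'}]$ is not accurate --- the paper's normality argument never conjugates $[N,x]$ by $x'$; it conjugates an element $k\in K$ by $x'$ and writes $k^{x'}=k[k,x']\in K[N,x']=K$, which is exactly the device you use in your inductive step, just deployed to prove normality rather than to run an induction on word length.
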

\begin{proof} Set $K=\prod_{x\in X}[N,x]$. It is clear that $K\leq[N,G]$ so we need to prove the other inclusion. If $K=1$, then $N$ is central in $G$ and therefore $[N,G]=1$. Thus, it is sufficient to show that $K$ is normal in $G$. The subgroup $N$ normalizes $K$ because $N$ is abelian and $K\leq N$. Therefore we only need to show that every element $x$ of $X$ normalizes $K$. It is straightforward that $x$ normalizes every subgroup of $N$ that contains $[N,x]$. Since $[N,x]\leq K$ for all $x\in X$, the lemma follows.
\end{proof}

\begin{lemma}\label{1.0} Let $G$ be a group such that $G/Z(G)$ has finite rank $r$. Let $M$ be a normal abelian subgroup of $G$. Then $[M,G]$ has rank at most $r^2$. 
\end{lemma}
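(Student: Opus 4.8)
The plan is to exploit that $[M,G]$ is abelian, since it is contained in the abelian normal subgroup $M$. Hence, to bound its rank by $r^2$ it suffices to show that every finitely generated subgroup $D\le[M,G]$ can be generated by $r^2$ elements. Because $[M,G]$ is generated by the commutators $[m,g]$ with $m\in M$ and $g\in G$, such a $D$ is contained in $[M,H]$ for some finitely generated subgroup $H=\langle g_1,\dots,g_k\rangle$ of $G$. Thus I would first reduce the whole statement to bounding the rank of $[M,H]$ for an arbitrary finitely generated $H\le G$.

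The first key observation is that a single commutator subgroup $[M,x]$ already has rank at most $r$. Indeed, since $M$ is abelian and normal, the map $\phi_x\colon M\to M$, $m\mapsto[m,x]$, is a homomorphism with image $[M,x]$ and kernel $C_M(x)$. As $Z(G)$ centralizes $M$, we have $M\cap Z(G)\le C_M(x)$, so $[M,x]\cong M/C_M(x)$ is a homomorphic image of $M/(M\cap Z(G))\cong MZ(G)/Z(G)$. The latter is a subgroup of $G/Z(G)$ and therefore has rank at most $r$; hence $r([M,x])\le r$. The same computation shows that $[M,x]$ depends only on the coset $xZ(G)$, since for $z\in Z(G)$ one has $[m,xz]=[m,x]$ for every $m\in M$.

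The second ingredient is to cut the number of factors down to $r$ using the rank hypothesis together with Lemma \ref{not new}. The image $HZ(G)/Z(G)$ is finitely generated, hence $r$-generated, so there are $h_1,\dots,h_r\in G$ with $HZ(G)=\langle h_1,\dots,h_r\rangle Z(G)$. Putting $L=\langle M,h_1,\dots,h_r,Z(G)\rangle$ one has $H\le L$, and applying Lemma \ref{not new} to $L$ with the normal abelian subgroup $M$ and the generating set $X=\{h_1,\dots,h_r\}\cup Z(G)$ gives $[M,L]=[M,h_1]\cdots[M,h_r]$, the factors $[M,z]$ with $z\in Z(G)$ being trivial. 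Consequently $D\le[M,H]\le[M,L]=[M,h_1]\cdots[M,h_r]$ is a product of $r$ subgroups of the abelian group $M$, each of rank at most $r$. Since the rank of a product of normal subgroups is at most the sum of their ranks, $[M,L]$ has rank at most $r^2$, and therefore so does $D$. As $D$ was an arbitrary finitely generated subgroup of $[M,G]$, this yields $r([M,G])\le r^2$.

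I expect the main obstacle to be the second step: the hypothesis on $G/Z(G)$ is only a local condition and need not make $G/Z(G)$ finitely generated, so the reduction must be carried out on each finitely generated $H$ separately. One must then ensure that replacing the $k$ generators $g_i$ by $r$ generators $h_j$ modulo the centre does not enlarge the relevant commutator subgroup. This is exactly what the coset-independence of $[M,x]$ and Lemma \ref{not new} are there to guarantee.
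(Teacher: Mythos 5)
Your proof is correct and follows essentially the same route as the paper's: both rest on Lemma \ref{not new}, on the homomorphism $m\mapsto[m,x]$ whose kernel contains the central elements (giving $r([M,x])\le r$), and on writing the relevant commutator subgroup as a product of $r$ such factors inside the abelian group $M$. The only difference is organizational: the paper first treats the finitely generated case using $N=MZ(G)$ and then handles the general case by a contradiction argument, whereas you localize each finitely generated subgroup of $[M,G]$ inside $[M,L]$ for a suitable subgroup $L$ containing $M$ and $Z(G)$ --- a slightly cleaner packaging of the same argument.
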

\begin{proof} Set $N=MZ(G)$ and suppose first that $G$ is finitely generated. Then $N$ is a normal abelian subgroup of $G$ such that $G/N$ can be generated by at most $r$ elements. Therefore we can choose elements $x_1,\dots,x_r\in G$ such that $G=\langle N,x_1,\dots,x_r\rangle$. By Lemma \ref{not new} $[N,G]=\prod[N,x_i]$. We note that for any $x\in G$ the map that takes every element $y\in N$ to $y^{-1}y^x$ is a homomorphism of $N$ in $[N,x]$ whose kernel is $C_N(x)$. Since $G/Z(G)$ has rank $r$, it follows that the rank of $N/C_N(x)$ is at most $r$ for all $x\in G$ and hence the rank of $[N,x]$ is at most $r$ as well. The equality $[N,G]=\prod[N,x_i]$ now guarantees that the rank of $[N,G]$ is bounded by $r^2$. Finally, we remark that $[N,G]=[M,G]$ and so in the case where $G$ is finitely generated the lemma follows.

Let us now drop the assumption that $G$ is finitely generated. Suppose that $[M,G]$ has rank at least $r^2+1$. Thus, we can choose elements $y_1,\dots,y_{r^2+1}\in[M,G]$ such that the subgroup $\langle y_1,\dots,y_{r^2+1}\rangle$ cannot be generated by $r^2$ elements. We can also choose a finitely generated subgroup $K$ in $G$ such that $y_1,\dots,y_{r^2+1}\in[M,K]\cap K$. This yields a contradiction since we know that for finitely generated groups the lemma holds. The proof is now complete.
\end{proof}

\begin{lemma}\label{1.1} Let $d$ and $r$ be positive integers and $G$ a group such that $G/Z(G)$ is soluble with derived length $d$ and $r(G/Z(G))=r$. Then $G'$ has finite rank and $r(G')\leq (1/2)dr(r-1)+(d-1)r^2$.
\end{lemma}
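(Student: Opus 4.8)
The plan is to argue by induction on the derived length $d$, with the engine of the whole proof being the following special case, which I would establish first: if $H$ is nilpotent of class at most $2$ (that is, $H'\le Z(H)$) and $r(H/Z(H))\le r$, then $r(H')\le r(r-1)/2$. Reducing to the finitely generated case exactly as in the second half of Lemma \ref{1.0}, I may assume $H$ is finitely generated, so that $H/Z(H)$ needs at most $r$ generators and hence $H=\langle Z(H),x_1,\dots,x_r\rangle$. Since $H'\le Z(H)$, the commutator map is bimultiplicative in each variable, so $H'$ is generated by the $r(r-1)/2$ commutators $[x_i,x_j]$ with $i<j$; as these lie in the centre, $r(H')\le r(r-1)/2$. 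The general case follows by the same localization: any finitely generated subgroup of $H'$ lies in $[K,K]$ for some finitely generated $K\le H$, to which the bound already applies.

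For the base case $d=1$ the hypothesis says $G/Z(G)$ is abelian, i.e.\ $G'\le Z(G)$, so $G$ has class at most $2$ and the auxiliary fact gives $r(G')\le r(r-1)/2$, which is the asserted bound when $d=1$.

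For the inductive step, assume $d\ge 2$ and the statement for derived length $d-1$. Writing $Z=Z(G)$, let $A$ be the preimage in $G$ of the $(d-1)$-th derived subgroup $(G/Z)^{(d-1)}$. Then $A\trianglelefteq G$, $Z\le A$, and $A/Z$ is abelian; in particular $A'\le Z$, so $A$ has class at most $2$, and since $A/Z(A)$ is a quotient of $A/Z$ it has rank at most $r$. I would then pass to $\tilde G=G/[A,G]$. The image of $A$ in $\tilde G$ is central, so $Z(\tilde G)$ contains it and $\tilde G/Z(\tilde G)$ is a quotient of $G/A\cong(G/Z)/(G/Z)^{(d-1)}$, which is soluble of derived length at most $d-1$ and rank at most $r$. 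The induction hypothesis then bounds $r(\tilde G')=r\bigl(G'/(G'\cap[A,G])\bigr)$ by $\tfrac12(d-1)r(r-1)+(d-2)r^2$, and by subadditivity of rank it remains only to bound $r(G'\cap[A,G])\le r([A,G])$.

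To estimate $r([A,G])$ I would split it through $A'$. Since $A'\le[A,G]$ and $A'\trianglelefteq G$, applying Lemma \ref{1.0} to the group $G/A'$ and its normal abelian subgroup $A/A'$ (noting that $(G/A')/Z(G/A')$ still has rank at most $r$, since $Z/A'$ is central in $G/A'$) shows that $[A,G]/A'=[A/A',G/A']$ has rank at most $r^2$; meanwhile the auxiliary class-$2$ fact applied to $A$ gives $r(A')\le r(r-1)/2$. Hence $r([A,G])\le r^2+r(r-1)/2$, and adding the contribution from $\tilde G'$ yields exactly $\tfrac12 dr(r-1)+(d-1)r^2$. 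I expect the main obstacle to be the bookkeeping around the centre: one must choose $A$ and the quotient $G/[A,G]$ so that the central-by-(derived length $d-1$) structure is preserved under the inductive hypothesis, and must separately control the \emph{central} part $A'$ of $[A,G]$, which is invisible to Lemma \ref{1.0} and is precisely where the class-$2$ estimate is indispensable.
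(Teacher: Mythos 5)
Your proof is correct, and the numerical bookkeeping works out exactly: the recursion $N(d,r)=N(d-1,r)+r^2+\tfrac12 r(r-1)$ is the same one the paper obtains. The ingredients are also the same as the paper's (the class-$2$ commutator count, Lemma \ref{1.0}, and induction on $d$), but you run the induction in the opposite direction, which is a genuinely different decomposition. The paper peels the derived series from the top: it applies the inductive hypothesis to the \emph{subgroup} $G'Z(G)$, whose central quotient has derived length $d-1$, to bound $r(G'')$, then passes to $G/G''$ and handles the resulting metabelian group directly via Lemma \ref{1.0} and the class-$2$ count. You peel from the bottom: you isolate the last nontrivial term $A$ (the preimage of $(G/Z)^{(d-1)}$), control $[A,G]$ directly by splitting it through $A'$ (class-$2$ estimate for $A'$, Lemma \ref{1.0} for $[A,G]/A'$), and apply the inductive hypothesis to the \emph{quotient} $G/[A,G]$. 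One small advantage of your arrangement is that it never needs a separate reduction of the whole lemma to the finitely generated case: finite generation enters only inside the self-contained class-$2$ fact and inside Lemma \ref{1.0}, each of which already localizes, whereas the paper begins with a global finitely-generated reduction. The only points worth spelling out in a final write-up are the routine verifications that $(G/A')/Z(G/A')$ and $\tilde G/Z(\tilde G)$ still have rank at most $r$ and, for the latter, derived length at most $d-1$ (so that the monotonicity of $N(\cdot,r)$ in $d$ covers the case where the derived length drops further); you indicate both, and both are correct.
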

\begin{proof} Denote the expression $(1/2)dr(r-1)+(d-1)r^2$ by $N(d,r)$. If the result is false, then $G'$ contains elements $y_1,\dots,y_{N(d,r)+1}$ such that the subgroup $\langle y_1,\dots,y_{N(d,r)+1}\rangle$ cannot be generated by less than $N(d,r)+1$ elements. We can choose a finitely generated subgroup $K$ of $G$ such that $y_1,\dots,y_{N(d,r)+1}\in K'$ and so $K$ provides a counter-example to the lemma. Thus, it is sufficient to prove the lemma for finitely generated groups and so we assume that $G$ is finitely generated. It follows that $G/Z(G)$ can be generated by $r$ elements. In particular we deduce that $G=\langle Z(G),x_1,\dots,x_r\rangle$ for suitable $x_1,\dots,x_r\in G$.

The lemma will be proved by induction on $d$. Suppose first that $d=1$. Then $G$ is nilpotent of class at most two and, since $G=\langle Z(G),x_1,\dots,x_r\rangle$, it is clear that $G'$ is generated by the commutators $[x_i,x_j]$, where $1\leq i<j\leq r$. There are at most $(1/2)r(r-1)$ such commutators and so in the case where $d=1$ the lemma holds.

Now we assume that $d\geq 2$ and that the rank of the derived group of $G'Z(G)$ is at most $N(d-1,r)$. Of course, the derived group of $G'Z(G)$ is precisely $G''$, the second derived group of $G$. We pass to the quotient $G/G''$ and assume that $G$ is metabelian. Then, by Lemma \ref{1.0}, the rank of $[G',G]$ is at most $r^2$. The quotient $G/[G',G]$ is nilpotent of class at most two and so by what we have established above the rank of $G'/[G',G]$ is at most $(1/2)r(r-1)$. Hence, in the case where $G$ is metabelian the rank of $G'$ is at most $(1/2)r(r-1)+r^2$. By the induction hypothesis the rank of $G''$ is at most $N(d-1,r)$ and so we deduce that the rank of $G'$ is at most $(1/2)r(r-1)+r^2+N(d-1,r)=N(d,r)$, as required.
\end{proof}

\begin{lemma}\label{1.2} There exists an integer $s=s(n,r)$ depending only on $n$ and $r$ such that if $G$ is a finite $p$-group of exponent dividing $p^n$ and rank at most $r$, then the order of $G$  is at most $p^s$.
\end{lemma}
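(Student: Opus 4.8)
The plan is to use the Lubotzky--Mann theory of powerful $p$-groups \cite{ddms} to reduce the problem to the powerful case, where the order is easily controlled. The essential difficulty here is that the bound $s$ must depend only on $n$ and $r$ and \emph{not} on $p$: a naive count of $p$-groups of rank $r$ and exponent $p^n$ would produce a bound depending on $p$. It is precisely the theory of powerful subgroups that supplies bounds independent of the prime, which is why that theory is indispensable for this lemma.

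First I would invoke the central reduction theorem of that theory: a finite $p$-group of rank $r$ possesses a powerful characteristic subgroup $H$ whose index $|G:H|$ is bounded by a function of $r$ alone, say $|G:H|\le p^{\lambda(r)}$. Thus it suffices to bound $|H|$, and $H$ inherits the hypotheses, since it is a powerful $p$-group of rank at most $r$ and of exponent dividing $p^n$.

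Next I would exploit the structure of powerful groups. For a powerful $p$-group the rank coincides with the minimal number of generators $d(H)$, so $d(H)\le r$; moreover a powerful $p$-group generated by $d$ elements is a product of $d$ cyclic subgroups, $H=\langle a_1\rangle\cdots\langle a_d\rangle$ with $d\le r$. Since the exponent of $H$ divides $p^n$, each factor $\langle a_i\rangle$ has order at most $p^n$, whence every element of $H$ is a product $a_1^{e_1}\cdots a_d^{e_d}$ and therefore $|H|\le p^{nr}$.

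Combining the two steps gives $|G|=|G:H|\,|H|\le p^{\lambda(r)+nr}$, so the conclusion holds with $s(n,r)=\lambda(r)+nr$. The only genuine obstacle is the first step --- producing a powerful subgroup whose index is bounded independently of $p$ --- but this is exactly the content of the Lubotzky--Mann theory and may be quoted directly from \cite{ddms}; everything else is a routine application of the product-of-cyclics description of powerful $p$-groups and requires no appeal to the classification of finite simple groups.
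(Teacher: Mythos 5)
Your proposal is correct and follows exactly the paper's own argument: both reduce to a powerful characteristic subgroup of $r$-bounded index via the Lubotzky--Mann theory (Theorem 2.13 of \cite{ddms}), then use the product-of-at-most-$r$-cyclic-subgroups description of powerful groups (Corollary 2.8 of \cite{ddms}) together with the exponent bound to get $|H|\le p^{nr}$. Your explicit bound $s(n,r)=\lambda(r)+nr$ matches the one implicit in the paper's proof.
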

\begin{proof} By Theorem 2.13 of \cite{ddms} $G$ has a powerful characteristic subgroup $N$ of index at most $p^{\mu(r)}$, where $\mu(r)$ is a number depending only on $r$. Corollary 2.8 in \cite{ddms} shows that $N$ is a product of at most $r$ cyclic subgroups. Therefore $N$ is of order at most $p^{nr}$ and the lemma follows. 
\end{proof}

\begin{proposition}\label{nilpo} If $G$ is a finite nilpotent group, then the rank of $G'$ is bounded in terms of $r=r(G/Z(G))$ only.
\end{proposition}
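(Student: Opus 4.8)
The plan is to reduce to the case of a finite $p$-group and then combine the Sylow decomposition with Lemma~\ref{1.1}. Since $G$ is finite nilpotent, it is the direct product $G=P_1\times\cdots\times P_k$ of its Sylow subgroups, where $P_i$ is a $p_i$-group for pairwise distinct primes $p_1,\dots,p_k$. Then $Z(G)=Z(P_1)\times\cdots\times Z(P_k)$, so $G/Z(G)=\prod_i P_i/Z(P_i)$, and $G'=P_1'\times\cdots\times P_k'$. Because the factors have pairwise coprime orders, every finite subgroup splits as the direct product of its intersections with the $P_i$, and hence the rank of such a coprime direct product equals the maximum of the ranks of the factors. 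Consequently $r=r(G/Z(G))=\max_i r(P_i/Z(P_i))$ and $r(G')=\max_i r(P_i')$. Thus it suffices to bound $r(P_i')$ in terms of $r(P_i/Z(P_i))\le r$ by a bound that does not depend on the prime; the proposition will then follow by taking the maximum over $i$. In other words, we may assume that $G$ is a finite $p$-group.

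So let $G$ be a finite $p$-group with $r(G/Z(G))=r$, and set $Q=G/Z(G)$. The crucial point is that $Q$, being a finite $p$-group of rank $r$, has derived length $d$ bounded by a function of $r$ alone, uniformly in $p$. I would obtain this from the Lubotzky--Mann theory already invoked in Lemma~\ref{1.2}: by Theorem~2.13 of \cite{ddms} the group $Q$ has a powerful characteristic subgroup $N$ of $r$-bounded index, so $Q/N$ has $r$-bounded order and hence $r$-bounded derived length, while $N$ is a powerful $p$-group of rank at most $r$, and powerful $p$-groups of bounded rank have $r$-bounded derived length. Putting the two contributions together bounds the derived length of $Q$ by a function $d=d(r)$ of $r$ only. (Equivalently, one may quote the known fact that a finite soluble group of rank $r$ has $r$-bounded derived length, which applies since $Q$ is a $p$-group.)

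With $d$ in hand the proof finishes at once: $Q=G/Z(G)$ is soluble of derived length $d$ and of rank $r$, so Lemma~\ref{1.1} gives $r(G')\leq (1/2)dr(r-1)+(d-1)r^2$, a bound depending on $r$ only. Feeding this back into the Sylow decomposition yields the proposition, since the resulting estimate is independent of the prime and we simply take the maximum over the Sylow subgroups. I expect the main obstacle to be the second paragraph, namely establishing the $r$-bounded derived length of a $p$-group of rank $r$ uniformly in $p$; once that is secured, Lemmas~\ref{1.0} and~\ref{1.1} do the rest mechanically.
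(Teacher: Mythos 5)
Your first paragraph (reduction to a finite $p$-group via the Sylow decomposition) is correct and is exactly the paper's opening move. The fatal problem is the second paragraph: it is \emph{not} true that a finite $p$-group of rank $r$ --- nor even a powerful one of rank $r$ --- has derived length bounded in terms of $r$ alone, and the ``known fact'' you invoke, that a finite soluble group of rank $r$ has $r$-bounded derived length, is likewise false. (What is true, and is the content of Lemma~\ref{zass}, is that $G/F(G)$ has $r$-bounded derived length; the nilpotent radical is precisely where the derived length can blow up.) For a counterexample, let $p$ be odd and let $G_i$ denote the $i$-th principal congruence subgroup of ${\rm SL}_2(\mathbf{Z}_p)$, where $\mathbf{Z}_p$ is the ring of $p$-adic integers. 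Each finite quotient $G_1/G_n$ is a powerful $p$-group of rank at most $3$, since $G_1$ is uniform of dimension $3$ and every closed subgroup of it is $3$-generated. But the Lie algebra $sl_2$ is perfect, so $[G_i,G_j]$ is dense in $G_{i+j}$ modulo $G_{i+j+1}$; iterating, the $k$-th derived subgroup of $G_1$ is not contained in $G_n$ once $n>2^k$, and the derived length of $G_1/G_n$ grows like $\log_2 n$ while the rank stays at $3$. So there is no uniformly bounded $d$ to feed into Lemma~\ref{1.1}, and your third paragraph collapses with it.

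The paper's proof never bounds the derived length of $G/Z(G)$ itself; instead it manufactures a quotient of \emph{bounded exponent}, where such a bound becomes available. Writing $K=G'$, it chooses $n$ minimal with $2^n\geq r+2$, sets $H=K^{p^n}$ (or $K^{p^{n+1}}$ for $p=2$), which is powerful by Shalev's wreath-product criterion since $G$ cannot involve $C_p\wr C_{p^n}$, and passes to $\overline G=G/H^p$. There $\overline K=K/H^p$ has exponent dividing $p^{n+2}$, so Lemma~\ref{1.2} (bounded rank \emph{plus bounded exponent} gives bounded order) shows $\overline K/Z(\overline K)$ has $r$-bounded order; only this forces the derived length of $\overline G$ to be $r$-bounded, and only then is Lemma~\ref{1.1} applied, giving an $r$-bounded rank $t$ for $\overline K$. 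The omitted piece $H$ is recovered by a Frattini argument: $H/H^p$ is $t$-generated, hence the powerful group $H$ is $t$-generated and of rank at most $t$, whence $r(K)\leq 2t$. This bounded-exponent detour and the separate treatment of the powerful subgroup $H$ are exactly what replace your false derived-length claim; they cannot be dispensed with.
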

\begin{proof} It is clear that without loss of generality we can assume that $G$ is a $p$-group for some prime $p$. Let $n$ be the least number such that $2^n\geq r+2$. Then obviously $G$ does not involve the wreath product $C_p\wr C_{p^n}$. Let $K=G'$ and $H=K^{p^n}$ if $p$ is odd and $H=K^{p^{n+1}}$ if $p=2$. By \cite{shalev92} $H$ is powerful. Let $\overline G=G/H^p$ and $\overline K=K/H^p$. Obviously $\overline K$ has exponent dividing $p^{n+2}$. By the hypothesis $\overline G/Z(\overline G)$ has rank at most $r$ and therefore, by Lemma \ref{1.2}, $\overline K/Z(\overline K)$ has order at most $p^s$ for some $r$-bounded number $s$. Let $d$ denote the derived length of $\overline G$. It is clear that $d\leq s+1$ and in particular $d$ is $r$-bounded. Lemma \ref{1.1} now tells us that the rank, say $t$, of $\overline K$ is $r$-bounded. Since $H^p$ is the Frattini subgroup of $H$ and since $H/H^p$ can be generated by $t$ elements, it follows that $H$ can be generated by $t$ elements. Taking into account that $H$ is powerful, we deduce that $H$ is of rank at most $t$. In particular $H^p$ is of rank at most $t$. Combining this with the fact that $\overline K$ has rank at most $t$ we now conclude that the rank of $K$ is at most 2$t$, as required. 
\end{proof}

A well-known theorem of Zassenhaus says that whenever $F$ is a field, the derived length of any soluble subgroup of $GL_n(F)$ is bounded by a function of $n$ only \cite{zasse}. This will be used in the following lemma. Given a group $G$, we denote by $F(G)$ the Fitting subgroup of $G$.

\begin{lemma}\label{zass} Let $G$ be a finite soluble group of rank $r$. Then the derived length of $G/F(G)$ is $r$-bounded.
\end{lemma}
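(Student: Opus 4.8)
The plan is to bound the derived length of $G/F(G)$ by realising it as a subgroup of a direct product of soluble linear groups of bounded degree, to which the Zassenhaus theorem applies. The starting point is the classical description of the Fitting subgroup of a finite soluble group: if $H_0=1\leq H_1\leq\dots\leq H_m=G$ is a chief series of $G$, then $F(G)=\bigcap_i C_G(H_i/H_{i-1})$. Granting this identity, the diagonal map $g\mapsto (gC_G(H_i/H_{i-1}))_i$ identifies $G/F(G)$ with a subgroup of the direct product $\prod_i G/C_G(H_i/H_{i-1})$.

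Next I would estimate each factor. Since $G$ is soluble, every chief factor $H_i/H_{i-1}$ is an elementary abelian $q$-group for some prime $q$, that is, a vector space over a finite field $F$, and its dimension equals the rank of $H_i/H_{i-1}$, which is at most $r$ because $H_i/H_{i-1}$ is a section of $G$. Thus $G/C_G(H_i/H_{i-1})$ embeds in $GL_{d_i}(F)$ with $d_i\leq r$, and being a section of the soluble group $G$ it is soluble. By the Zassenhaus theorem there is a function $z$ such that the derived length of each $G/C_G(H_i/H_{i-1})$ is at most $z(r)$. Since the derived length of a direct product equals the supremum of the derived lengths of its factors, the product $\prod_i G/C_G(H_i/H_{i-1})$ has derived length at most $z(r)$, and hence so does its subgroup $G/F(G)$. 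This gives the required $r$-bounded estimate, and note that the (possibly large) number of chief factors is irrelevant, since only the maximum of the factorwise derived lengths enters.

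It remains to justify the description of $F(G)$, which is where the actual work lies, although the statement is standard; I would argue as follows. On one hand, $F(G)$ centralises every chief factor: for a chief factor $H_i/H_{i-1}$ that is a $p$-group, the normal $p$-subgroup $O_p(G)$ fixes a nontrivial element of it (a $p$-group acting on a nontrivial elementary abelian $p$-group always does), its fixed-point set is $G$-invariant, so by minimality of the chief factor $O_p(G)$ centralises $H_i/H_{i-1}$; and for a prime $q\neq p$ the image of $[O_q(G),H_i]$ in $H_i/H_{i-1}$ is simultaneously a $q$-group and a subgroup of the $p$-group $H_i/H_{i-1}$, hence trivial, so $O_q(G)$ too centralises the factor. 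As $F(G)$ is the product of the subgroups $O_p(G)$, it centralises every chief factor. On the other hand, the intersection $C=\bigcap_i C_G(H_i/H_{i-1})$ stabilises the chief series, and by the Hall--Kaloujnine stability theorem the stability group of a chain of subgroups is nilpotent; since $C$ is normal in $G$ this forces $C\leq F(G)$. The two inclusions yield $F(G)=C$. Once this identification is in place, the direct-product embedding together with the Zassenhaus theorem finishes the proof at once, so the only genuinely delicate point is the structural description of $F(G)$.
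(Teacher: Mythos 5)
Your proof is correct and follows essentially the same route as the paper: both pass to a chief (unrefinable normal) series, observe that its factors are elementary abelian of dimension at most $r$, invoke Zassenhaus to bound the derived length of the induced soluble automorphism groups, and use nilpotency of the stabilizer of the series to land inside $F(G)$. The only difference is cosmetic: the paper avoids the full identity $F(G)=\bigcap_i C_G(H_i/H_{i-1})$ by simply noting that the $d$th derived subgroup centralizes every factor and is therefore nilpotent, so only the one inclusion you prove via Hall--Kaloujnine is actually needed.
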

\begin{proof} Consider an unrefinable normal series $$G=N_1>N_2 > \dots > N_k > N_{k+1}=1$$ in $G$. The factors $N_i/N_{i+1}$ are elementary abelian of rank at most $r$ and so every factor $N_i/N_{i+1}$ can be viewed as a linear space of dimension at most $r$ over some field with $p$ elements. By Zassenhaus' Theorem there is an integer $d$ depending only on $r$ such that every soluble group of automorphisms of $N_i/N_{i+1}$ has derived length at most $d$. Let $K$ be the $d$th derived group of $G$. Then $K$ centralizes every factor of the series $$G=N_1>N_2>\dots >N_k > N_{k+1}=1$$ and hence $K$ is nilpotent. Thus, $K\leq F(G)$ and $G/F(G)$ has derived length at most $d$.
\end{proof}

In the proof of the next lemma we use the well-known corollary of the classification of finite simple groups that $Aut\,S/Inn\,S$, the outer automorphism group of $S$, is soluble for every finite simple group $S$ (this fact is also known under the name of the Schreier Conjecture).  

\begin{lemma}\label{schrei} Let $G$ be a finite group of rank $r$ and assume that $G$ has no nontrivial normal soluble subgroups. Then $G$ has a normal series $$M\leq T\leq G$$ such that $M$ is isomorphic to a direct product of at most $r$ non-abelian simple groups, $T/M$ is soluble and $G/T$ has order at most $r!$.
\end{lemma}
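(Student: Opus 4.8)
The plan is to build the series from the socle of $G$. First I would set $M=Soc\,G$, the product of the minimal normal subgroups of $G$. Each minimal normal subgroup is characteristically simple, hence a direct product of isomorphic simple groups; since $G$ has no nontrivial normal soluble subgroup, none of these can be abelian, so every minimal normal subgroup is a direct product of non-abelian simple groups. Taking the (direct) product of all of them I obtain $M=S_1\times\cdots\times S_k$ with each $S_i$ non-abelian simple.

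To bound $k$ I would use the rank hypothesis. By the Feit--Thompson theorem each $S_i$ has even order, so I may pick an involution $t_i\in S_i$; the elements $t_1,\dots,t_k$ lie in distinct direct factors and hence generate an elementary abelian subgroup $\langle t_1,\dots,t_k\rangle\cong C_2^{\,k}$, which cannot be generated by fewer than $k$ elements. Since $r(G)=r$, this forces $k\le r$, so $M$ is a direct product of at most $r$ non-abelian simple groups, as required.

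Next I would identify $G$ with a group of automorphisms of $M$. Because $M$ is a product of non-abelian simple groups we have $Z(M)=1$, and a standard argument shows $C_G(M)=1$: the subgroup $C_G(M)$ is normal in $G$ and meets $M$ trivially, so if it were nontrivial it would contain a minimal normal subgroup of $G$ lying outside $M$, contradicting the definition of the socle. Hence conjugation embeds $G$ into $Aut\,M$, with $M$ corresponding to $Inn\,M$. The factors $S_1,\dots,S_k$ are precisely the minimal normal subgroups of $M$, so $G$ permutes them; this yields a homomorphism $\pi\colon G\to Sym(\{S_1,\dots,S_k\})$. I set $T=\ker\pi$. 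Then $G/T$ embeds into the symmetric group on $k\le r$ letters, so $|G/T|\le k!\le r!$.

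Finally I would check that $T/M$ is soluble. By construction $T$ stabilises each $S_i$, so restriction to the factors gives an embedding $T\hookrightarrow \prod_{i=1}^{k} Aut\,S_i$ under which $M=Inn\,M$ maps onto $\prod_{i=1}^{k} Inn\,S_i$. Passing to the quotient, $T/M$ embeds into $\prod_{i=1}^{k}\big(Aut\,S_i/Inn\,S_i\big)$, a direct product of outer automorphism groups of non-abelian simple groups. By the Schreier conjecture each factor $Aut\,S_i/Inn\,S_i$ is soluble, and a finite direct product of soluble groups is soluble; hence $T/M$ is soluble, which gives the desired series $M\le T\le G$. The only genuinely nontrivial input is this last step, where solubility of $T/M$ rests on the Schreier conjecture; the remaining steps are the routine socle analysis, and the main bookkeeping is ensuring that the permutation action on the factors and the restriction map to $\prod Aut\,S_i$ are set up correctly.
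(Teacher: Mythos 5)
Your proposal is correct and follows essentially the same route as the paper: take $M$ to be the socle, bound the number of simple factors by $r$ using involutions, let $T$ be the kernel of the permutation action on the factors, and invoke the Schreier conjecture. The only cosmetic difference is that you obtain solubility of $T/M$ by embedding it directly into $\prod_i\bigl(Aut\,S_i/Inn\,S_i\bigr)$ via the second isomorphism theorem, whereas the paper reaches the same conclusion by showing that the last term of the derived series of $T$ equals $M$; both hinge on the same key fact $C_G(M)=1$.
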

\begin{proof} Let $M$ be the product of all minimal normal subgroups of $G$. Of course, $M=S_1\times S_2\times\dots\times S_k$, where the subgroups $S_1,S_2,\dots,S_k$ are isomorphic with non-abelian simple groups. Since $r(M)\leq r$ and since all subgroups $S_1,S_2,\dots,S_k$ have even order, it follows that $k\leq r$.

Our group $G$ acts by conjugation on $M$ and this action induces a natural homomorphism of $G$ in the symmetric group on $k$ symbols. Let $T$ be the kernel of the homomorphism. In other words, $T$ is the intersection of the normalizers $N_G(S_i)$ for $i=1,2,\dots,k$. Let $U$ be the last term of the derived series of $T$. Thus, $U$ is the intersection of all normal subgroups $N$ of $T$ such that $T/N$ is soluble. For every $i=1,2,\dots,k$ put $T_i=S_iC_T(S_i)$. Then $T/T_i$ embeds into $Aut\,S_i/Inn\,S_i$, which is soluble. Therefore $U\leq T_i$ for all $i=1,2,\dots,k$. Hence, any element in $U$ induces an inner automorphism of $S_i$ for all $i\leq k$. It follows that for every $x\in U$ there exist elements $x_i\in S_i$ such that $$x_1x_2\dots x_kx^{-1}\in C_T(S_i) \text{ for all } i=1,2,\dots,k.$$ Thus, $x_1x_2\dots x_kx^{-1}\in C_T(M)$. Since $G$ has no nontrivial normal soluble subgroups, it follows that $C_T(M)=1$ and therefore $x\in M$. Thus, $U=M$ and $T/M$ is soluble.

Since $G/T$ embeds in the symmetric group on $k$ symbols, it follows that the index of $T$ in $G$ is at most $r!$. The proof is complete.
\end{proof}

\begin{proof}[Proof of Theorem \ref{main}] Recall that $G$ is a finite group such that $G/Z(G)$ has rank $r$. Let $R$ be the maximal normal soluble subgroup of $G$ and $F=F(R)$. Since $R/Z(R)$ has rank at most $r$ and since $F(R/Z(R))=F/Z(R)$, Lemma \ref{zass} shows that the derived length of $R/F$ is $r$-bounded. By Proposition \ref{nilpo} $F'$ has $r$-bounded rank and therefore we can pass to the quotient $G/F'$. Hence, without loss of generality we can assume that the derived length of $R$ is $r$-bounded. Now Lemma \ref{1.1} guarantees that the rank of $R'$ is $r$-bounded and we pass to the quotient $G/R'$. Thus, we will assume that $R$ is abelian. Since $G/R$ has rank $r$, we can choose $r$ elements $a_1,\dots,a_r$ such that $G=\langle R,a_1,\dots,a_r\rangle$. Then, by Lemma \ref{1.0}, the rank of $[R,G]$ is at most $r^2$. Passing to the quotient $G/[R,G]$ assume that $R=Z(G)$.

The structure of $G/R$ is described in Lemma \ref{schrei}. We assume that $G\neq R$ and let $M/R$ be the the product of all minimal normal subgroups of $G/R$. Then $M/R$ is a direct product of $k\leq r$ simple non-abelian groups. For $i=1,\dots,k$ let $S_i$ denote the subgroup of $G$ such that $S_i/R$ is a simple factor of $M/R$. Of course, $M=S_1S_2\dots S_k$. Let $D_i$ be the derived group of $S_i$ for $i=1,2,\dots,k$. Then $D_i$ is a perfect group and $D_i/Z(D_i)$ is a simple group of rank at most $r$. Thus, $D_i$ is a so-called quasisimple group. The Schur multipliers of all simple groups are well-known and can be found in \cite[p. 302--303]{gorenstein}. All of them are abelian groups of rank at most three. Thus, the rank of $D_i$ is at most $r+3$. From this we deduce that the product $D=D_1D_2\dots D_k$ has rank at most $r(r+3)$. Thus, we pass to the quotient $\overline{G}=G/D$. Lemma \ref{schrei} tells us that $\overline{G}$ has a normal soluble subgroup $\overline{T}=T/D$ of index at most $r!$. Precisely in the same way as we have shown above that the rank of $[R,G]$ is $r$-bounded we can now show that the rank of $[\overline{T},\overline{G}]$ is $r$-bounded as well. Passing to the quotient $\overline{G}/[\overline{T},\overline{G}]$, assume that $\overline{T}$ is central. Then the index of $Z(\overline{G})$ in $\overline{G}$ is at most $r!$ and by Schur's theorem the order of the derived group of $\overline{G}$ is $r$-bounded. The proof is now complete.
\end{proof}

A routine inverse limit argument along the lines of \cite{kewe} now shows that if $G$ is a locally finite group such that $G/Z(G)$ has rank $r$, then the rank of $G'$ is $r$-bounded. The following corollary is a little stronger.

\begin{corollary}\label{locfin} Let $G$ be a group such that $G/Z(G)$ is locally finite and has finite rank $r$. Then $G'$ is locally finite and the rank of $G'$ is $r$-bounded.
\end{corollary}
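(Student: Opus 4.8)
The plan is to reduce the statement to Theorem \ref{main} by a local argument, the key point being that although $Z(G)$ need not be locally finite, its finitely generated central traces can be rendered finite without disturbing the (finite) derived subgroups we care about. First I would record the easy half, namely local finiteness of $G'$. Let $F$ be any finitely generated subgroup of $G'$. Writing each of its generators as a product of commutators $[a,b]$ with $a,b\in G$ and collecting the finitely many elements of $G$ that occur, I obtain a finitely generated subgroup $H\le G$ with $F\le H'$. Since $H\cap Z(G)\le Z(H)$ and $H/(H\cap Z(G))$ embeds in the locally finite group $G/Z(G)$, the quotient $H/Z(H)$ is finite (being a finitely generated subgroup of a locally finite group) and has rank at most $r$. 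By Schur's theorem $H'$ is finite, hence so is $F$; thus $G'$ is locally finite and $r(G')$ is the supremum of the minimal numbers of generators of the finite subgroups $F\le G'$.

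Thus it suffices to bound, by an $r$-bounded number, the rank of an arbitrary finite subgroup $F\le G'$, and by the previous paragraph I may fix a finitely generated $H\le G$ with $F\le H'$, with $H/Z(H)$ finite of rank at most $r$, and with $H'$ finite. The obstacle is that $H$ itself may be infinite, so Theorem \ref{main} cannot be applied to $H$ directly. To circumvent this I would pass to a suitable finite quotient. As $H$ is finitely generated and $Z(H)$ has finite index in it, $Z(H)$ is a finitely generated abelian group, so it splits as $Z(H)=T\oplus A$ with $T$ finite and $A$ free abelian of finite rank. Since $A$ is central it is normal in $H$, and since $H'$ is finite while $A$ is torsion-free we have $H'\cap A=1$. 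Consequently the natural map $H\to H/A$ restricts to an isomorphism of $H'$ onto $(H/A)'$, and $H/A$ is finite because $|H/A|=|H/Z(H)|\cdot|T|$.

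Finally I would apply Theorem \ref{main} to the finite group $\overline{H}=H/A$. The image of $Z(H)$ in $\overline{H}$ is central, so $\overline{H}/Z(\overline{H})$ is a quotient of $\overline{H}/(Z(H)/A)\cong H/Z(H)$ and therefore has rank at most $r$. Theorem \ref{main} then gives that $r(\overline{H}')=r(H')$ is $r$-bounded, whence $F\le H'$ needs at most an $r$-bounded number of generators. Taking the supremum over all finite $F\le G'$ yields the desired $r$-bounded bound for the rank of $G'$. I expect the only real subtlety to be the reduction step of the second paragraph, that is, isolating the torsion-free central summand $A$ and verifying $H'\cap A=1$, which is exactly what allows the finite-group Theorem \ref{main} to absorb a group $G$ whose center may have infinite torsion-free rank.
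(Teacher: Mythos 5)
Your proof is correct, but it takes a genuinely different route from the paper. The paper first quotes Robinson for the local finiteness of $G'$, then invokes the inverse-limit argument (sketched just before the corollary) to conclude that $G''$ has $r$-bounded rank, passes to $G/G''$, and finishes the metabelian case with Lemma \ref{1.1}. You instead localize directly: you prove local finiteness of $G'$ from Schur's theorem applied to finitely generated $H\leq G$ with $F\leq H'$, and then reduce each such $H$ to a genuinely finite group by splitting the finitely generated abelian group $Z(H)$ as $T\oplus A$ with $A$ free abelian, observing that $H'\cap A=1$ because $H'$ is finite and $A$ is torsion-free, and applying Theorem \ref{main} to $H/A$. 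All the steps check out: $H\cap Z(G)\leq Z(H)$ gives $|H/Z(H)|<\infty$ and $r(H/Z(H))\leq r$; $Z(H)$ is finitely generated since it has finite index in the finitely generated $H$; and $\overline{H}/Z(\overline{H})$ is a quotient of $H/Z(H)$, so Theorem \ref{main} applies. What your approach buys is self-containedness: you avoid both the unproved ``routine inverse limit argument'' and the reduction through $G''$ via Lemma \ref{1.1}, replacing them with the elementary device of killing a torsion-free central complement --- which is exactly the right way to absorb the fact that $Z(G)$ itself need not be locally finite. What the paper's route buys is economy of means within its own framework (it reuses machinery already set up for Theorem \ref{second}) and a slightly more explicit final expression, $r(r-1)+r^2$ plus the bound for $r(G'')$, for the rank modulo the second derived group.
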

\begin{proof} By \cite[p. 102]{robinson} $G'$ is locally finite and so the above comment shows that $G''$ has finite $r$-bounded rank. Thus, we can pass to the quotient $G/G''$ and assume that $G$ is metabelian. Now Lemma \ref{1.1} tells us that $r(G')\leq r(r-1)+r^2$.
\end{proof}

We will now proceed to establish Theorem \ref{second}. First of all we note that subgroups and quotients of a generalized radical group are generalized radical as well. We also notice that periodic generalized radical groups are locally finite.

 Let $G$ be a group having an ascending series whose factors are either cyclic or periodic. If the number of infinite cyclic factors in the series is finite, we call it $0$-rank of $G$ and denote it by $r_0(G)$. If the number of infinite cyclic factors in the series is infinite, we say that $G$ has infinite $0$-rank. It is not difficult to see that the $0$-rank of $G$ does not depend on the choice of the ascending series whose factors are either cyclic or periodic.

\begin{lemma}\label{solv} Suppose that $G$ is a finitely generated generalized radical group of finite rank $r$ and assume that $G$ has no nontrivial normal periodic subgroups. Then $G$ has a subgroup $L$ of $r$-bounded index which is soluble with $r$-bounded derived length.
\end{lemma}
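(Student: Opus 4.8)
The plan is to reduce $G$ to a linear group of $r$-bounded degree by means of its Hirsch--Plotkin radical, and then to apply the theory of linear groups of finite rank. Let $H$ denote the Hirsch--Plotkin radical of $G$, i.e. the largest normal locally nilpotent subgroup. The torsion elements of $H$ form a characteristic subgroup, which is then normal and periodic in $G$ and hence trivial by hypothesis; so $H$ is torsion-free. A torsion-free locally nilpotent group of finite rank $r$ is nilpotent of class bounded in terms of $r$, so $H$ is nilpotent of $r$-bounded class with finite Hirsch length $h=r_0(H)\le r$.

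The key step, and the one I expect to be the main obstacle, will be to prove that $H$ is self-centralizing, i.e. $C_G(H)=Z(H)$. Put $C=C_G(H)$ and $Z=Z(H)=C\cap H$. Since $C$ is characteristic in $G$, its own Hirsch--Plotkin radical is characteristic in $G$ and locally nilpotent, hence lies in $H$; as $Z$ is central in $C$, this forces the Hirsch--Plotkin radical of $C$ to equal $Z$. Suppose $C\neq Z$. Then $C/Z$ is a nontrivial generalized radical group with trivial Hirsch--Plotkin radical, so the bottom factor of a generalized radical series cannot be locally nilpotent and $C/Z$ must possess a nontrivial normal locally finite subgroup; let $P/Z$ be the largest such subgroup, which is characteristic in $C/Z$, so that $P$ is characteristic in $G$. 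Now $Z\le Z(P)$ and $P/Z$ is locally finite, whence $P/Z(P)$ is locally finite; the result recalled in the introduction, that a group with locally finite central quotient has locally finite derived subgroup, shows that $P'$ is locally finite. Being characteristic in $G$ and periodic, $P'$ is trivial, so $P$ is abelian; its torsion subgroup is then characteristic in $G$ and periodic, hence trivial, so $P$ is torsion-free abelian. But then $P$ is a normal locally nilpotent subgroup of $C$, forcing $P\le Z$, a contradiction. Therefore $C_G(H)=Z(H)$ is abelian, and this is exactly where both the generalized radical structure and the absence of nontrivial periodic normal subgroups are essential.

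It then remains to exploit linearity. The conjugation action of $G$ on the Mal'cev completion of $H$ identifies $G/C_G(H)=G/Z(H)$ with a group of automorphisms of a nilpotent Lie algebra of dimension $h\le r$, hence with a linear group over $\mathbb{Q}$ of degree at most $r$. Every subgroup of $G/Z(H)$ has rank at most $r$, so this linear group contains no nonabelian free subgroup; by the Tits alternative it is soluble-by-finite, and the quantitative forms of Jordan's theorem and of Zassenhaus' theorem (the latter already invoked in Lemma \ref{zass}) supply a soluble normal subgroup $A/Z(H)$ of $r$-bounded index and $r$-bounded derived length. Pulling back, $L:=A$ is normal of $r$-bounded index in $G$ and is abelian-by-(soluble of $r$-bounded derived length), hence soluble of $r$-bounded derived length, as required. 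The only genuinely delicate point is the self-centralizing step of the previous paragraph; once $C_G(H)=Z(H)$ is in hand, the passage to a linear group of degree $\le r$ makes the index bound automatic through Jordan's theorem.
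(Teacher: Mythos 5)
Your proof is correct in outline, but it takes a genuinely different route from the paper. The paper's proof is essentially a citation: since every abelian subgroup $A$ of $G$ satisfies $r_0(A)\leq r$, Theorem 1 of \cite{2007} bounds $r_0(G)$ in terms of $r$, and Theorem A of \cite{2007} then yields a normal series $K\leq L\leq G$ with $K$ torsion-free nilpotent, $L/K$ torsion-free abelian and $G/L$ finite of $r$-bounded order; the bound on $r_0(G)$ bounds the nilpotency class of $K$ and hence the derived length of $L$. What you do instead is essentially reprove that structure theorem from first principles: you show the Hirsch--Plotkin radical $H$ is torsion-free nilpotent of $r$-bounded class and self-centralizing (this is the genuinely nontrivial step, and your argument for it is sound), then embed $G/Z(H)$ via the Mal'cev completion into $GL_h(\mathbb{Q})$ with $h\leq r$, and finish with the Tits alternative plus quantitative results on soluble-by-finite linear groups. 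Your approach buys self-containedness at the cost of heavier machinery; the paper's buys brevity by outsourcing. Two points in your write-up deserve sharper justification: (i) the assertion that a nontrivial generalized radical group with trivial Hirsch--Plotkin radical has a nontrivial \emph{normal} locally finite subgroup uses the facts that ascendant locally nilpotent subgroups lie in the Hirsch--Plotkin radical and ascendant locally finite subgroups lie in the locally finite radical --- the terms of the ascending series are not normal in the whole group, so this is not automatic; (ii) what you call ``Jordan's theorem'' is really the Mal'cev--Platonov theorem that a soluble-by-finite subgroup of $GL_n(F)$ in characteristic zero has a soluble (indeed triangularizable) normal subgroup of $n$-bounded index; Jordan's theorem proper concerns finite subgroups only. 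With those references made precise, your argument is a valid alternative proof.
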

\begin{proof} It is clear that $r_0(A)\leq r$ for every abelian subgroup $A$ of $G$. By \cite[Theorem 1]{2007} $r_0(G)$ is bounded in terms of $r$ only. Moreover \cite[Theorem A]{2007} implies that $G$ has a normal series $$K\leq L\leq G$$ such that $K$ is torsion-free nilpotent, $L/K$ is torsion-free abelian and $G/L$ is finite of $r$-bounded order. Since $r_0(G)$ is bounded in terms of $r$ only, it follows that the nilpotency class of $K$ is $r$-bounded and hence the derived length of $L$ is $r$-bounded, too.
\end{proof}

We are now ready to prove Theorem \ref{second}.

\begin{proof}[Proof of Theorem \ref{second}] Recall that $G$ is a locally generalized radical group such that $G/Z(G)$ has finite rank $r$. Assume first that $G$ is finitely generated and let $T/Z(G)$ be the product of all periodic subgroups of $G/Z(G)$. Thus $T/Z(G)$ is the maximal normal periodic subgroup of $G/Z(G)$ and so $G/T$ has no nontrivial normal periodic subgroups. By Lemma \ref{solv} $G$ has a subgroup $L$ of $r$-bounded index such that $L/T$ is soluble with $r$-bounded derived length. Since $G$ is a generalized radical group, it follows that all periodic sections of $G$ are locally finite and therefore so is $T/Z(G)$. Corollary \ref{locfin} tells us now that $r(T')$ is $r$-bounded and passing to $G/T'$ we can assume that $T'=1$. In this case $T$ is abelian and so $L$ is soluble with $r$-bounded derived length. Lemma \ref{1.1} can now be applied to deduce that $r(L')$ is $r$-bounded and we can pass to the quotient $G/L'$. Now $L$ is a normal abelian subgroup and so by Lemma \ref{1.0} $[L,G]$ has rank at most $r^2$. Passing to the quotient $G/[L,G]$, assume that $L$ is central. Then the index of $Z(G)$ in $G$ is $r$-bounded and by Schur's theorem the order of the derived group of $G$ is $r$-bounded as well. This proves the theorem in the case where $G$ is finitely generated. In other words, there exists an $r$-bounded number, say $R_0$, such that $r(K')\leq R_0$ whenever a group $K$ satisfies the hypothesis of the theorem and is finitely generated. Suppose now that our group $G$ is not necessarily finitely generated. If $r(G')\geq R_0+1$, we can choose $y_1,\dots,y_{R_0+1}\in G'$ such that the subgroup $\langle y_1,\dots,y_{R_0+1}\rangle$ cannot be generated by $R_0$ elements. We can also choose a finitely generated subgroup $K$ in $G$ such that $y_1,\dots,y_{R_0+1}\in K'$. This yields a contradiction since we know that $r(K')\leq R_0$. The proof is now complete.

\end{proof}


\begin{thebibliography}{99}

\bibitem{adian} S. I. Adian, On some torsion-free groups, Math. USSR-Izv., {\bf 5} (1971), 475–484.

\bibitem{ddms} J. D. Dixon, M. P. F. du Sautoy, A. Mann, D. Segal, {\it
Analytic $p$-adic groups}, Cambridge Univ. Press, 1991.

\bibitem{2007} M. R. Dixon, L. A. Kurdachenko, N. V. Polyakov, On some ranks of
infinite groups, Ricerche Mat., {\bf 56} (2007), 43–-59.

\bibitem{dfk} S. Franciosi, F. de Giovanni, L. A. Kurdachenko, The Schur property and groups with uniform conjugacy classes, J. Algebra {\bf 174} (1995),
823--847.

\bibitem{gorenstein} D. Gorenstein, {\it Finite simple groups: An introduction to their classification}, Plenum Press, New York and London, 1982.

\bibitem{kewe} O. H. Kegel, B. F. A. Wehrfritz, \textit{Locally finite groups}, North-Holland, Amsterdam, 1973.

\bibitem{kur} L. A. Kurdachenko, On groups with minimax conjugacy classes.
"Infinite groups and adjoining algebraic structures", Naukova Dumka
Kiev, 1993, 160–-177.

\bibitem{mann} A. Mann, The exponents of central factor and commutator groups, J. Group Theory {\bf 10} (2007), no. 4, 435--436. 

\bibitem{olsh} A. Yu. Olshanskii. Geometry of defining relations in groups. Kluwer Academic Publishers, 1991.

\bibitem{polo} Ya.D. Polovicky, Groups with extremal classes of conjugate elements, Sibir. Math. J. 5 (1964), 891–-895.

\bibitem{robinson} D. J. S. Robinson, {\it Finiteness Conditions and Generalized Soluble Groups}, Part 1, Springer Verlag, Berlin-New York, 1972.

\bibitem{shalev92} A. Shalev, Characterization of p-Adic Analytic Groups in
Terms of Wreath Products, J. Algebra {\bf 145} (1992), 204--208.

\bibitem{zasse} H. Zassenhaus, Beweis eines Satzes über diskrete Gruppen,
Abh. Math. Sem. Univ. Hamburg, {\bf 12} (1938), 289 -– 312.

\bibitem{ze1} E. Zelmanov, The solution of the Restricted Burnside Problem 
for groups of odd exponent, Math. USSR Izv., {\bf 36} (1991), 41--60.

\bibitem{ze2} E. Zelmanov, The solution of the Restricted Burnside Problem 
for 2-groups, Math. Sb., {\bf 182} (1991), 568--592.

\end{thebibliography}
\end{document}